\newcommand \datum {April 29, 2021}
\numberwithin{equation}{section}
\theoremstyle{plain}
 \newtheorem{theorem}{Theorem}[section]
 \newtheorem{lemma}[theorem]{Lemma}
 \newtheorem{corollary}[theorem]{Corollary}
\theoremstyle{definition}
 \newtheorem{observation}[theorem]{Observation}
 \newtheorem{definition}[theorem]{Definition}
 \newtheorem{remark}[theorem]{Remark}
\newenvironment{enumeratei}{\begin{enumerate}[\quad\upshape (i)]} {\end{enumerate}}
\newcommand\mmm {\kern 5pt\wedge \kern 5pt}
\newcommand\blk [1] {\boxed{#1}} 
\newcommand \ncm {4.5cm}
\newcommand \ocm {4.8cm}
\newcommand \set [1]{\{#1\}}
\newcommand \tuple [1] {\langle #1 \rangle}
\newcommand \pair [2] {\tuple{#1,#2}}
\newcommand \tbf[1] {\textbf{#1}}  
\newcommand \defiff {\overset{\textup{def}}{\iff}}
\newcommand \Equ[1] {\textup{Equ}(#1)}
\newcommand \Quo[1] {\textup{Quo}(#1)}
\newcommand \Nplu {\mathbb N^+}
\newcommand \vonal {\noalign{\hrule}} 
\newcommand \restrict [2]  {#1\mathord\rceil_{\kern-1.5pt #2} }
\newcommand\red[1]{{\textcolor{red}{#1}}}
\begin{document}
\setlength\arraycolsep{1.5pt} 

\title{(1\,+\,1\,+\,2)-generated  lattices of quasiorders}

\author[D. Ahmed]{Delbrin Ahmed}
\email{delbrin@math.u-szeged.hu; delbrinhahmed@gmail.com}
\address{University of Szeged, Bolyai Institute\, and the University of Duhok}
\urladdr{http://www.math.u-szeged.hu/~delbrin/}

\author[G.\ Cz\'edli]{G\'abor Cz\'edli}
\email{czedli@math.u-szeged.hu}
\urladdr{http://www.math.u-szeged.hu/~czedli/}
\address{University of Szeged, Bolyai Institute. 
}

\begin{abstract} A lattice is $(1+1+2)$-generated if it has a four-element generating set such that exactly two of the four generators are comparable.
We prove that the lattice $\Quo n$ of all quasiorders (also known as preorders) of an $n$-element set is $(1+1+2)$-generated for
$n=3$ (trivially), $n=6$ (when $\Quo 6$ consists of $209\,527$ elements), $n=11$, and for every natural number $n\geq 13$. 
In 2017, the second author and J. Kulin proved
that $\Quo n$ is $(1+1+2)$-generated if either $n$ is odd and at least $13$ or $n$ is even and at least $56$. Compared to the 2017 result, this paper presents twenty-four new numbers $n$ such that $\Quo n$ is $(1+1+2)$-generated. Except for $\Quo 6$, an extension of Z\'adori's method is used.
\end{abstract}

\thanks{This research of the second author was supported by the National Research, Development and Innovation Fund of Hungary under funding scheme K 134851.}

\subjclass {06B99}

\keywords{Quasiorder lattice, lattice of preorders, minimum-sized generating set,
four-generated lattice, $(1+1+2)$-generated lattice, Z\'adori's method}

\date{\datum\hfill{\red{\tbf{Hint:} check the authors' web pages for possible updates}}}

\maketitle

\section{Introduction}\label{sect:intro} 

\subsection{Outline}\label{ssect:outline}
The present section is introductory, and it is structured as follows. Subsection~\ref{ssect:basics} contains the basic concepts used in the paper. Subsection~\ref{ssect:history} gives a short historical survey. This survey motivates our target, which is  presented in Subsection~\ref{ssect:target}. Subsection~\ref{ssect:jointauthors} is a comment on the joint authorship.
Sections~\ref{sec:quosix} and \ref{sect:bign} prove for some values of $n$ 
that the lattice $\Quo n$ of quasiorders of an $n$-element finite set is $(1+1+2)$-generated. Sections~\ref{sec:quosix}, the longest section, proves this for $n=3$ and $n=6$ while Section~\ref{sect:bign} is devoted to $n=11$ and all $n\geq 13$.
At the end of Section~\ref{sect:bign}, Remark~\ref{rem:newandopen} summarizes which $\Quo n$ 
are known to be $(1+1+2)$-generated and which are not.

\subsection{Basic concepts}\label{ssect:basics}
Given a set $A$, a relation $\rho\subseteq A^2$ is a \emph{quasiorder} (also known as a \emph{preorder}) if $\rho$ is reflexive and transitive. With respect to set inclusion, the set of all quasiorders of $A$ form a lattice $\Quo A=\tuple{\Quo A,\subseteq}$, the \emph{quasiorder lattice} of $A$. The meet of $\rho,\tau\in \Quo A$ is their intersection, and so we can write that $\rho\wedge\tau=\rho\cap\tau$. The join $\rho\vee \tau$ of $\rho$ and $\tau$
is the transitive closure of $\rho\cup\tau$. That is, for $x,y\in A$, we have $\pair x y\in \rho\vee \tau$ if and only if there exists an $n\in\Nplu:=\set{1,2,3,4,\dots}$ and there are
elements $z_0=x, z_1, z_2,\dots, z_{n-1}, z_n=y$ in $A$ such that $\tuple{z_{i-1},z_i}\in\rho\cup\tau$ for all $i\in\set{1,\dots,n}$. Symmetric quasiorders are \emph{equivalences} (also known as  \emph{equivalence relations}). The equivalences of $A$ also form a lattice, the \emph{equivalence lattice} $\Equ A$ of $A$, which is a sublattice of $\Quo A$. 

Since we are only interested in these lattices up to isomorphism, we will often write $\Equ{|A|}$ and $\Quo{|A|}$ instead of $\Equ{A}$ and $\Quo{A}$, respectively. In particular, $\Quo 6$, which plays a distinguished role in this paper, is the quasiorder lattice with a six-element underlying set. 
Note that   $\Equ A$ and $\Quo A$ are complete lattices but the concept of \emph{complete} lattices only occurs here in the introductory section when the literature is surveyed. In the rest of the sections, we only deal with \emph{finite} lattices, which are automatically complete.  

A four-element subset $X$ of a poset (partially ordered set) $Y$ is a \emph{$(1+1+2)$-subset} of $Y$ if exactly two elements of $X$ are comparable. A subset $X$ of a lattice $L$ is a \emph{$(1+1+2)$-generating set} of $L$ if  $X$ is a $(1+1+2)$-subset of $L$ that generates $L$. If a lattice $L$ has a $(1+1+2)$-generating set, then we say that $L$ is \emph{$(1+1+2)$-generated}.

\subsection{Earlier results}\label{ssect:history}
In 1976, Poguntke and Rival~\cite{poguntkerival} proved that each lattice can be embedded into a four-generated finite simple lattice. (It turned out much later that three generators are sufficient if we drop simplicity; see Cz\'edli~\cite{czg3embed}.) Partition lattices, which are the same as equivalence lattices up to isomorphism, are well known to be simple. Thus, Pudl\'ak and T\r uma's result that every finite lattice is embeddable into a finite partition lattice, see
~\cite{pudlaktuma}, superseded  Poguntke and Rival's result in 1980. However, Poguntke and Rival's result still served well as the motivation for Strietz~\cite{strietz75} and \cite{strietz77} to prove that $\Equ n$ is four-generated for $3\leq n\in\Nplu$ and it is $(1+1+2)$-generated for $10\leq n\in\Nplu$.

In 1983, Z\'adori~\cite{zadori} gave an entirely new method to find four-element generating sets of $\Equ n$ and extended Strietz's result by proving that  $\Equ n$ is $(1+1+2)$-generated even for $7\leq n\in\Nplu$. His method was the basis of all the more involved methods that were used to find small generating sets of $\Equ A$ and $\Quo A$ in the last three and a half decades. During this period, four-element generating sets
and even $(1+1+2)$-generating sets (in the sense of complete generation) of $\Equ A$ were given for all infinite sets $A$ with ``accessible'' cardinalities; see Cz\'edli~\cite{czgAtEqSmall},
\cite{czgFgenLargeASM}, and \cite{czgEEKgenEqu}.  Even the lion's share of Cz\'edli and Oluoch~\cite{czgoluoch} is based on Z\'adori's method. 
Also, this period witnessed that extensions of his method were used to find small generating sets of $\Quo A$ 
by Chajda and Cz\'edli~\cite{ChCzG}, Cz\'edli \cite{czgFGQL2017}, Cz\'edli and Kulin~\cite{CzGKJconcise}, and Tak\'ach~\cite{takach}, and even the methods used by Dolgos~\cite{dolgos} and Kulin~\cite{KulinQ5} show lots of similarity with Z\'adori's method. Theorem~\ref{thm:czgkj} of this subsection will summarize the strongest results on $(1+1+2)$-generating sets of quasiorder lattices that were proved before the present paper.

Note that sometimes the structure $\tuple{\Quo A, \bigvee,\bigwedge, {}^{-1}}$ 
with $\rho^{-1}:=\set{\pair x y: \pair y x\in\rho}$
rather than the complete lattice $\tuple{\Quo A, \bigvee,\bigwedge}$ 
was considered.  So the title of Tak\'ach~\cite{takach} should not mislead the reader since, after removing the operation $\rho\mapsto \rho^{-1}$, Tak\'ach~\cite{takach}  yields a six-element generating set of $\tuple{\Quo A, \bigvee,\bigwedge}$.

\begin{table}
\[
\vbox{\tabskip=0pt\offinterlineskip
\halign{\strut#&\vrule#\tabskip=4pt plus 2pt&
#\hfill& \vrule\vrule\vrule#&
\hfill#&\vrule#&
\hfill#&\vrule#&
\hfill#&\vrule#&
\hfill#&\vrule#&
\hfill#&\vrule#&
\hfill#&\vrule#&
\hfill#&\vrule\tabskip=0.1pt#&
#\hfill\vrule\vrule\cr
\vonal\vonal\vonal\vonal
&&\hfill$n$&&$\,1$&&$\,2$&&$\,3$&&$\,4$&&$\,5$&&$\,6$&&$\,7$&
\cr
\vonal\vonal
&&\hfill$|\Equ n|$&&$1$&&$2$&&$5$&&$15$&&$52$&&$203$&&$877$&
\cr\vonal\vonal
&&\hfill$|\Quo n|$&&$1$&&$4$&&$29$&&$355$&&$6\,942$&&$209\,527$&&$9\,535\,241$&
\cr\vonal\vonal\vonal\vonal
}} 
\]
\caption{$|\Equ n|$ and $|\Quo n|$  for $n\in\set{1,2,\dots,7}$}\label{tableEQ}
\end{table}

In 1983, Z\'adori~\cite{zadori} left  the problem whether $\Equ n$ is $(1+1+2)$-generated for $n\in\set{5,6}$ open. The difficulty of these small values lies in the fact that his method does not work for small $n$. This explains that it took 37 years to solve Z\'adori's problem on $\Equ 5$ and $\Equ 6$; see Cz\'edli and Oluoch~\cite{czgoluoch} for the solution.  While \cite{czgoluoch} contains a traditional proof that $\Equ 6$ is $(1+1+2)$-generated, computer programs were used to show that $\Equ 5$ is not. This shows that ``small'' equivalence lattices create more difficulty than larger ones.  The quotient marks indicate that $\Equ 5$ and $\Equ 6$ are not so small; see Table~\ref{tableEQ}. This table has been taken from Sloane~\cite{sloane}. Note that the $|\Equ n|$ row and the first five numbers of the $|\Quo n|$ row of  Table~\ref{tableEQ} also occur in Chajda and Cz\'edli~\cite{ChCzG} and were obtained by a straightforward computer program twenty-five years ago.

Our knowledge on small generating sets of quasiorder lattices evolved parallel to and in interactions with the analogous question about equivalence lattices. Small generating sets of \emph{infinite} (complete) quasiorder lattices were given in Chajda and Cz\'edli \cite{ChCzG} even before dealing with infinite equivalence lattices.
Surprisingly, it was quasiorders that showed the way how to pass from finite equivalence lattices to infinite ones.  
Prior to the present paper, our knowledge on small generating sets of $\Quo A$ was summarized  in the last sentence of Theorem 1.1 in  Cz\'edli~\cite{czgFGQL2017} and  in Theorem 3.5 and Lemma 3.3 of Cz\'edli and Kulin~\cite{CzGKJconcise} as follows. 

\begin{theorem} [Cz\'edli~\cite{czgFGQL2017} and Cz\'edli and Kulin~\cite{CzGKJconcise}]\label{thm:czgkj} If $A$ is a non-singleton set with accessible cardinality, then the following assertions hold.
\begin{enumeratei}
\item\label{thm:czgkja} If $|A|\neq 4$, then $\Quo A$ is four-generated as a complete lattice.
\item\label{thm:czgkjb} If $13\leq |A|$ is a finite \emph{odd} number, then  $\Quo A$ is $(1+1+2)$-generated.
\item\label{thm:czgkjc} If $56\leq |A|$ is a finite \emph{even} number, then  $\Quo A$ is $(1+1+2)$-generated.
\item\label{thm:czgkjd} If $A$ is infinite, then  $\Quo A$ is $(1+1+2)$-generated as a complete lattice.
\item\label{thm:czgkje} If $A$ is finite and $|A|\geq 3$, then $\Quo A$ is not a three-generated lattice. 
\end{enumeratei}
\end{theorem}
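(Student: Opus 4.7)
The unifying strategy for parts \ref{thm:czgkja}--\ref{thm:czgkjd} is to extend Z\'adori's method from equivalence lattices to quasiorder lattices. That is, I would arrange the elements of $A$ in a carefully chosen geometric pattern (a Z\'adori-type fence or rhomboidal grid) and then write down an explicit four-element subset $X\subseteq\Quo A$ that I claim generates $\Quo A$, completely if $A$ is infinite. To verify this it suffices to recover from $X$, using only joins and meets, every \emph{atom} of $\Quo A$, namely every quasiorder of the form $\Delta_A\cup\set{\pair x y}$ with $x\neq y$ (where $\Delta_A$ denotes the identity relation on $A$), because every quasiorder is a (complete) join of such atoms.

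For \ref{thm:czgkjb}--\ref{thm:czgkjd}, I would take three of the four generators to be equivalences $\alpha,\beta,\gamma$ whose blocks are dictated by three different ``colourings'' of the Z\'adori pattern, and the fourth to be a genuine quasiorder $\delta$ encoding a directed traversal of the pattern. To enforce the $(1+1+2)$ comparability shape I would arrange that precisely one pair in $\set{\alpha,\beta,\gamma,\delta}$ is comparable, say $\delta\le\alpha$, while the other five pairs are pairwise incomparable. The combinatorial heart of the proof is threefold: first, meets of the equivalences isolate some ``local'' quasiorders supported near a distinguished vertex of the pattern; second, joins with $\delta$ followed by intersections with equivalence atoms propagate these local quasiorders one step at a time in the direction prescribed by $\delta$; third, one checks that every ordered pair in $A^2$ is reachable by such propagation, which yields every atom of $\Quo A$. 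The infinite case \ref{thm:czgkjd} runs the same construction transfinitely over accessible cardinalities, and \ref{thm:czgkja} follows either by dropping the comparability constraint or by ad hoc four-element generating sets for the few sizes left over.

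The main technical obstacle is the even case \ref{thm:czgkjc}. The natural Z\'adori configuration has an intrinsic odd parity, so for even $|A|$ one must pad the pattern with auxiliary points and show that ordered pairs involving these extra points are still reachable, all the while preserving the $(1+1+2)$ comparability shape and the propagation mechanism. The bookkeeping required to make this honest is what pushes the threshold from $13$ in the odd case up to $56$ in the even case, and it is here that the bulk of the effort would go. Finally, for \ref{thm:czgkje} I would argue that $\Quo n$ is not three-generated by a structural argument on its atomic content: the $n^2-n$ atoms of $\Quo n$ are too numerous and interact too richly under joins and meets to all arise from only three generators; I would make this precise by a case analysis on the possible shapes of a three-element subset of $\Quo n$ and, for each shape, by exhibiting an explicit proper sublattice of $\Quo n$ that contains it.
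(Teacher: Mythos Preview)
This theorem is not proved in the present paper at all: it is quoted in Subsection~\ref{ssect:history} as a summary of results already established in Cz\'edli~\cite{czgFGQL2017} and Cz\'edli and Kulin~\cite{CzGKJconcise}, and the paper gives no argument for any of its five parts. So there is no ``paper's own proof'' to compare your attempt against; the paper merely cites the statement and then goes on to improve parts \eqref{thm:czgkjb} and \eqref{thm:czgkjc} via Theorem~\ref{thm:sBbszmK}.

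That said, your sketch for \eqref{thm:czgkja}--\eqref{thm:czgkjd} is broadly in line with how the cited sources actually proceed, and in particular with the way the present paper extends the same method in Section~\ref{sect:bign}: one fixes a Z\'adori configuration, chooses three equivalences $\alpha,\beta,\gamma$ according to slope, adds a single non-symmetric quasiorder $\delta\le\alpha$, shows that $\epsilon_0$ and $\eta_k$ lie in the generated sublattice, invokes Z\'adori's Lemma~\ref{lemma:ZdrlMM} to capture all of $\Equ{\restrict B A}$, and finally uses a Kulin-type lemma (Lemma~\ref{lemma:KEndWzQ} here) together with the single genuine quasiorder $\delta$ to pass from $\Equ A$ to $\Quo A$. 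One inaccuracy in your description: meets of the three \emph{equivalences} alone never produce a proper quasiorder; the directedness comes only from $\delta$, and the propagation runs through Lemma~\ref{lemma:KEndWzQ} rather than through a step-by-step walk using $\delta$ directly.

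Your plan for \eqref{thm:czgkje}, however, is not a viable proof. A brute case analysis on ``shapes'' of three-element subsets of $\Quo n$ and an ad hoc proper sublattice for each shape is hopeless for general $n\ge 3$, since the number of such shapes grows without bound. The argument actually used in \cite{CzGKJconcise} is structural and uniform in $n$: it exploits the order-two lattice automorphism $\rho\mapsto\rho^{-1}$ of $\Quo A$ (whose fixed-point lattice is $\Equ A$) to show that no three elements can generate. Your outline gives no hint of this mechanism, so as written part~\eqref{thm:czgkje} has a genuine gap.
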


Note  that  ZFC has a model in which all infinite sets are of accessible cardinalities. 
We do not know whether $\Quo A$ has a four-element generating set if $|A|=4$. (Based on our experience with computer programs for equivalence lattices, see Cz\'edli and Oluoch~\cite{czgoluoch}, we guess that this question could be solved by a computer program that would require days of computer time if the same personal computer was used as in case of \cite{czgoluoch}. Developing such a computer program was not pursued at the time of writing.)

Although this paper presents $(1+1+2)$-generating sets for several new values of $n$, the existence of $(1+1+2)$-generating sets remains an unsolved problem for a few values. The only value of $n\geq 2$ for which we know that $\Quo n$ is \emph{not} $(1+1+2)$-generated is $n=2$; this follows trivially from $|\Quo 2|=4$ since a four-element lattice cannot have a three-element antichain.

To conclude this subsection, we note the following about quasiorder lattices.
Armed with our tools based on Z\'adori's method,
the large values of $n$ create less difficulty than its small values (apart from very small values where the problem is trivial or easy). In view of the history of equivalence lattices, it is not a surprise that the present 
paper extends the scope of \eqref{thm:czgkjb} and \eqref{thm:czgkjc} of Theorem~\ref{thm:czgkj} by adding some \emph{slightly smaller} numbers $n$. The surprise is that now we also add a significantly smaller number, $n=6$, where $\Quo 6$ is a huge lattice but Z\'adori's method cannot be used.

\subsection{Target}\label{ssect:target}
We are going to prove that the $209\,527$-element lattice $\Quo 6$ is $(1+1+2)$-generated; see Theorem~\ref{thm:wwQhTszNsJk}, which is the main result of this paper.
We will also prove that $\Quo A$ is $(1+1+2)$-generated for 
$|A|\in\set{11}\cup\set{n\in \Nplu: n\geq 13}$; see Theorem \ref{thm:sBbszmK}.

\subsection{Joint authorship}\label{ssect:jointauthors}
Sections~\ref{sect:intro} and \ref{sec:quosix} are joint work  of the two authors.
The contribution of the first author to Section~\ref{sec:quosix} is about sixty percent. Section~\ref{sect:bign} is due to the second author.

\section{A $(1+1+2)$-generating set of $\Quo 6$}\label{sec:quosix}

The least quasiorder $\set{\pair x x: x\in A}$ of $A$ will be denoted by $\Delta=\Delta_A$.
For elements $x$ and $y$ of $A$, the following two members of $\Quo A$ will play a particularly important role in our proofs:
\begin{equation}
q(x,y)=\set{\pair x y}\cup\Delta\,\,\text{ and }\,\,e(x,y)=e(y,x)=\set{\pair x y, \pair y x}\cup\Delta.
\label{eq:wsZlshRnGpcngRszq}
\end{equation}
We allow that $x=y$; however, $q(x,x)=\Delta$ and $e(x,x)=\Delta$ will not play any significant role in our proofs.  The atoms of $\Quo A$ and those of $\Equ A$ are exactly the $q(x,y)$ and the $e(x,y)$ with $x\neq y\in A$. 
The importance of $q(x,y)$ and $e(x,y)$ lies in the following well-known and trivial fact: for any non-singleton set $A$ and for every $\rho\in \Quo A$ and $\theta\in\Equ A$, 
\begin{equation}
\rho=\bigvee\set{q(x,y): \pair x y\in\rho}\quad\text{ and }\quad
\theta=\bigvee\set{e(x,y): \pair x y\in\theta}.
\label{dg:eq:wbhWrnGtrSg}
\end{equation}

\begin{figure}[ht]
\centerline
{\includegraphics[scale=1]{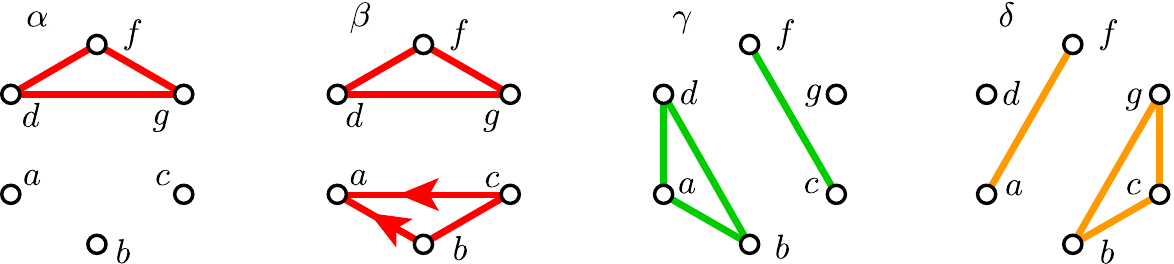}}
\caption{$\alpha$, $\beta$, $\gamma$, and $\delta$}\label{figabgd}
\end{figure}

Next, let $A=\set{a,b,c,d,f,g}$. Note the gap in the alphabetical order; $e$ is not in $A$ since $e$ is used to denote an atom of $\Equ A$ in \eqref{eq:wsZlshRnGpcngRszq} and many times later. We define the following quasiorders of $A$:
\begin{equation}
\begin{aligned}
\alpha&:=e(d,f)\vee e(f,g), && \beta:=\alpha\vee e(b,c)\vee q(b,a)\cr
\gamma&:=e(a,b)\vee e(a,d)\vee e(c,f),&&\delta:=e(b,c)\vee e(c,g)\vee e(a,f).
\end{aligned}
\label{eq:nGymitDlBsGgRt}
\end{equation}
These quasiorders are visualized in Figure~\ref{figabgd} by the corresponding directed graphs. (The edges without arrows are directed in both ways.)
 Namely, for $\rho\in\set{\alpha,\beta,\gamma,\delta}$ and $x,y\in A$, we have that $\pair x y\in\rho$ if and only if there is a directed path (possibly of length 0) in the graph  corresponding to $\rho$ in Figure~\ref{figabgd}.  

For the sake of the following remark, let
$\tuple{u_1,u_2,u_3,u_4,u_5,u_6}:=\tuple{b,a,c,d,f,g}$ and $\beta^\star:=\beta\vee q(a,b)=\beta\vee q(u_2,u_1)$.
\begin{remark}\label{rem:whGLdD}
Each of $\alpha$, $\beta^\star$, $\gamma$, and $\delta$ is an equivalence, $\alpha<\beta^\star$, and it has been proved in Cz\'edli and Oluoch~\cite{czgoluoch} that $\set{\alpha,\beta^\star,\gamma,\delta}$ is a $(1+1+2)$-generating set of  $\Equ 6=\Equ{\set{u_1,u_2,u_3,u_4,u_5,u_6}}$. 
\end{remark}
We are going to prove the following statement.
\begin{theorem}\label{thm:wwQhTszNsJk} 
With the quasiorders defined in \eqref{eq:nGymitDlBsGgRt},  $\set{\alpha,\beta,\gamma,\delta}$ is a $(1+1+2)$-generating set of the quasiorder lattice $\Quo 6=\Quo{\set{a,b,c,d,f,g}}$. Hence, $\Quo 6$ is  $(1+1+2)$-generated.
\end{theorem}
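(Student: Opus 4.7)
My plan is to show that the sublattice $L$ of $\Quo 6$ generated by $\set{\alpha,\beta,\gamma,\delta}$ coincides with $\Quo 6$. By \eqref{dg:eq:wbhWrnGtrSg} it is enough to place every atom $q(x,y)$, with distinct $x,y\in A$, into $L$. I would organize the proof in three stages: (i) verify the $(1+1+2)$-structure and extract the easy atoms, (ii) show $\beta^\star=\beta\vee q(a,b)\in L$, which by Remark~\ref{rem:whGLdD} forces $\Equ 6\subseteq L$, and (iii) bootstrap from a single directed atom, and the then-available equivalences, to every $q(u,v)$.

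For stage (i), reading off the directed graphs in Figure~\ref{figabgd} shows that $\alpha<\beta$ is the only comparability among the four generators (for example, $(d,f)\in\alpha\setminus\gamma$, $(a,b)\in\gamma\setminus\beta$, $(f,g)\in\beta\setminus\delta$, $(c,g)\in\delta\setminus\gamma$), so $\set{\alpha,\beta,\gamma,\delta}$ is indeed a $(1+1+2)$-subset of $\Quo 6$. Two atoms fall out immediately as pairwise meets: $q(b,a)=\beta\wedge\gamma$, because $(b,a)$ is the unique asymmetric pair of $\beta$ that also lies in $\gamma$, and $e(b,c)=\beta\wedge\delta$, by an analogous inspection of pairs.

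Stage (ii) is the main obstacle. The pair $(a,b)$ appears in $\gamma$ only as part of the symmetric three-element block $\set{a,b,d}$, so no single meet with a generator can isolate $e(a,b)$. My strategy is to use the directed atom $q(b,a)$ together with $\delta$, whose block $\set{b,c,g}$ cuts $\gamma$'s block $\set{a,b,d}$ asymmetrically, to build a chain of helper elements in $L$. A natural chain is $q(g,f)=\alpha\wedge(\delta\vee q(b,a))$, then $q(g,c)=\delta\wedge(\gamma\vee q(g,f))$, then $\mu:=\gamma\wedge(\beta\vee q(g,c))$, and finally $\nu:=\gamma\wedge(\mu\vee\delta)$; a direct calculation gives $\nu=\Delta\cup\set{(a,b),(b,a),(c,f),(f,c),(d,a),(d,b)}$, which already contains $e(a,b)$ together with a small, controllable set of extra pairs. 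Further meets and joins with elements already placed in $L$ are then used to strip the extras and leave $e(a,b)$, after which $\beta^\star=\beta\vee e(a,b)\in L$ is automatic. This cleanup step, together with the pair-by-pair bookkeeping that justifies every intermediate equality, is the lengthiest and most delicate part of the argument, and is exactly the reason Z\'adori's method cannot be used here.

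With $\Equ 6\subseteq L$ and $q(b,a)\in L$ in hand, stage (iii) is quick. For pairwise distinct $x,y,u,v\in A$ one verifies by a direct computation the identity
\[
q(u,v)=\bigl(q(x,y)\vee e(x,u)\vee e(y,v)\bigr)\wedge e(u,v),
\]
because the join on the right, after transitive closure, is $\Delta\cup e(x,u)\cup e(y,v)\cup(\set{x,u}\times\set{y,v})$, and meeting with $e(u,v)$ keeps precisely $\Delta\cup\set{(u,v)}$. The degenerate variant $q(u,y)=(q(x,y)\vee e(x,u))\wedge e(u,y)$, valid when $x,y,u$ are pairwise distinct, handles the cases in which an index has to repeat. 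Applying these identities a bounded number of times starting from $q(b,a)$ reaches every atom $q(u,v)$, and \eqref{dg:eq:wbhWrnGtrSg} completes the proof that $L=\Quo 6$.
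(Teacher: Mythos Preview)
Your high-level plan is sound and genuinely different from the paper's: rather than directly producing twelve atoms and invoking the Disjoint Paths Principle, you would obtain $\beta^\star\in L$, import $\Equ 6\subseteq L$ as a black box from Remark~\ref{rem:whGLdD}, and finish with (essentially) Lemma~\ref{lemma:KEndWzQ}. Stages~(i) and~(iii) are correct; your computations of $q(b,a)$, $e(b,c)$, $q(g,f)$, $q(g,c)$, $\mu$, and $\nu$ are accurate, and your Stage~(iii) identity is a clean special case of the Disjoint Paths Principle.

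The gap is Stage~(ii). You stop at $\nu=\Delta\cup\{(a,b),(b,a),(c,f),(f,c),(d,a),(d,b)\}$ and assert that ``further meets and joins \dots\ strip the extras and leave $e(a,b)$,'' but you do not carry this out, and it is precisely the heart of the problem. The only generator containing the pair $(a,b)$ is $\gamma$, and $\gamma$ also contains every ``extra'' pair of $\nu$, so no meet with a lattice term built from $\alpha,\beta,\delta$ (none of which contain $(a,b)$) can isolate $e(a,b)$ while a meet involving $\gamma$ will not kill the extras; any successful cleanup must therefore pass through several auxiliary joins and meets, and you give none. The paper's own proof confirms that this is where the real work lies: $q(a,b)$ is the \emph{last} of twenty-five derived relations, reached only after building $q(a,f)$, $q(f,g)$, and $q(g,b)$ through a long chain. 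Without an explicit derivation of $e(a,b)$ (or $q(a,b)$) your proposal defers the one nontrivial computation, so as written it is a strategy rather than a proof.
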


As Remark~\ref{rem:whGLdD} indicates, our generating set is only slightly different from the one used in Cz\'edli and Oluoch~\cite{czgoluoch}. However, this little difference results in a substantial change in the complexity of the proofs. Indeed, while only six equations were necessary in \cite{czgoluoch} to prove that $\set{\alpha,\beta^\star,\gamma,\delta}$ generates $\Equ 6$, we are going to use  twenty-five equations,  \eqref{gd:eq:BrNaa}--\eqref{gd:eq:BrNcb}, to prove Theorem~\ref{thm:wwQhTszNsJk}.

\begin{proof}[Proof of Theorem~\ref{thm:wwQhTszNsJk}]
First, we fix our notation and describe the corresponding technique. 
For $\rho\in\Quo A$, let $\Theta(\rho):=\rho\cap\rho^{-1}=\set{\pair x y: \pair x y\in \rho\text{ and }\pair y x\in\rho}$, which is the largest equivalence relation of $A$ included in $\rho$. On the quotient set $A/\Theta(\rho)$, we can define a relation  $\rho/\Theta(\rho)$ as follows: for $\Theta(\rho)$-blocks $x/\Theta(\rho)$ and 
 $y/\Theta(\rho)$ in  $A/\Theta(\rho)$, we let
\begin{equation*}
\tuple{x/\Theta(\rho), y/\Theta(\rho)} \in \rho/\Theta(\rho) \defiff \pair x y \in \rho.
\end{equation*}
It is a well-known folkloric fact that $\rho/\Theta(\rho)$ is well defined and it is a partial order, the so-called \emph{order induced by} $\rho$. Hence, $A/\Theta(\rho)= \tuple{ A/\Theta(\rho),\rho/\Theta(\rho) }$ is a poset. For several choices of $\rho$, 
we will frequently draw the Hasse diagram of this poset in order to give a visual description of $\rho$. In such a diagram, the $\Theta(\rho)$-blocks are indicated by rectangles. However, we adopt the following convention:
\begin{equation}
\parbox{8.3cm}{if $\set{x}$ is a singleton block of $\Theta(\rho)$ such that for every $\set x\neq Y\in A/\Theta(\rho)$ we have that $\tuple{\set x,Y}\notin \rho/\Theta(\rho)$ and $\tuple{Y,\set x}\notin \rho/\Theta(\rho)$, then $\set x$ is not indicated in the Hasse diagram of $A/\Theta(\rho)$.}
\label{gd:pbx:swWsTrhPJzS}
\end{equation}
In other words, if $x\in A$ has the property that 
\[(\forall y\in A)\,\bigl(\set{\pair x y,\pair y x}\cap\rho\neq\emptyset \Longrightarrow x=y  \bigr),
\] 
then (the necessarily singleton) block $x/\Theta(\rho)$ is not indicated in the Hasse diagram.
For example, the quasiorders defined in \eqref{eq:nGymitDlBsGgRt} are visualized by diagrams as follows.
\begin{equation}
\alpha:\blk{d,f,g},\text{ }
\beta: 
\begin{matrix}
\blk{a\phantom{f}}&\blk{d,f,g}\cr
|&\cr
\blk{b,c}
\end{matrix},\text{ }
\gamma:
\begin{matrix}
\blk{a,b,d}&\blk{c,f}
\end{matrix}
,\text{ }
\delta: 
\begin{matrix}
\blk{b,c,g}&\blk{a,f}\,.
\end{matrix}
\label{gd:eq:FgWRmkMjs}
\end{equation}
Since we are going to perform a lot of computations with quasiorders, convention \eqref{gd:pbx:swWsTrhPJzS} and the above-mentioned visual approach will be helpful 
for the reader in the rest of the proof. 
Note that if a diagram according to our convention is given and $x\neq y\in A$, then we have $\pair x y\in \rho$ if and only if both the block $x/\Theta(\rho)$ of $x$ and that of $y$ are drawn in the (Hasse) diagram and   $x/\Theta(\rho) \leq y/\Theta(\rho)$ according to the  diagram. In particular,  if $x$ and $y$ are in the same block, then $\pair x y\in \rho$. Note also that our computations in the proof never require dealing with pairs of the form $\pair x x$. Although the following observation is quite easy to prove, it will substantially ease our task.

\begin{observation}[Disjoint Paths Principle]\label{obs:DPPsTrs} For $k,s\in \Nplu$ and a set $B$, let $x,y,\,\,u_0=x, u_1,\dots, u_{k-1}, u_k=y,\,\,v_0=x, v_1,\dots, v_{s-1}, v_s=y $ be elements
of $B$ such that $\set{u_1,\dots, u_{k-1}}\cap\set{ v_1,\dots, v_{s-1}}=\emptyset$, $|\set{u_1,\dots, u_{k-1}}|=k-1$, and $|\set{v_1,\dots, v_{s-1}}|=s-1$.
For $i\in\set{1,\dots,k}$ and $j\in\set{1,\dots,s}$, let $p_i\in \set{e,q}$ and $r_j\in\set{e,q}$; see \eqref{eq:wsZlshRnGpcngRszq} for the meaning of $q$ and $e$.
Assume that there is an  $i'\in\set{1,\dots,k}$ such that $p_{i'}=q$ or there is a $j'\in\set{1,\dots,s}$ such that $r_{j'}=q$. Then
\begin{equation}
q(x,y) =\Bigl( \bigvee_{i=1}^k p(u_{i-1},u_i)\Bigr)\wedge
\Bigl(\bigvee_{j=1}^s r(v_{j-1},v_j)\Bigr).
\label{eq:wzBmnGVshszTTRkTv}
\end{equation}
\end{observation}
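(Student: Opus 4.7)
The plan is to prove \eqref{eq:wzBmnGVshszTTRkTv} by establishing the two inclusions separately. Set $\sigma := \bigvee_{i=1}^k p_i(u_{i-1}, u_i)$ and $\tau := \bigvee_{j=1}^s r_j(v_{j-1}, v_j)$, so the right-hand side of \eqref{eq:wzBmnGVshszTTRkTv} is $\sigma \wedge \tau = \sigma \cap \tau$. The easier inclusion $q(x,y) \subseteq \sigma \cap \tau$ amounts to showing $\pair x y \in \sigma$ and $\pair x y \in \tau$; each of these follows at once from the description of the join as the transitive closure, applied to the chain of atomic pairs $\pair{u_{i-1}}{u_i} \in p_i(u_{i-1}, u_i) \subseteq \sigma$ (and symmetrically along the $v$-path for $\tau$).

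For the reverse inclusion I would take an arbitrary $\pair a b \in \sigma \cap \tau$ with $a \neq b$ and deduce $\pair a b = \pair x y$. Since $\sigma$ is the transitive closure of $\bigcup_{i=1}^k p_i(u_{i-1}, u_i)$, every non-diagonal member of $\sigma$ arises from a finite sequence of steps, each of which uses a pair of the form $\pair{u_{i-1}}{u_i}$ or (only when $p_i = e$) $\pair{u_i}{u_{i-1}}$. Hence both $a$ and $b$ lie in $U := \set{u_0, u_1, \ldots, u_k}$, and the symmetric argument applied to $\tau$ gives $\set{a,b} \subseteq V := \set{v_0, v_1, \ldots, v_s}$. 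The disjointness hypothesis on internal vertices then forces $U \cap V = \set{x, y}$, so $\pair a b$ lies in $\set{\pair x y,\, \pair y x}$.

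It remains to exclude $\pair y x$, and this is where the hypothesis that some $p_{i'}$ or some $r_{j'}$ equals $q$ is used. By symmetry I would assume $p_{i'} = q$. In the directed multigraph on $U$ whose edges are the non-diagonal generating pairs of $\sigma$, each interior vertex $u_i$ with $1 \le i \le k-1$ is incident only to the edges between $u_{i-1}$ and $u_i$ and between $u_i$ and $u_{i+1}$, since the $u_i$'s are pairwise distinct. Consequently any directed walk from $y = u_k$ back to $x = u_0$ in this graph must traverse the $u$-path in reverse and, in particular, must use the edge between $u_{i'-1}$ and $u_{i'}$ in the direction $u_{i'} \to u_{i'-1}$, which is not a generator of $\sigma$. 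This rules out $\pair y x \in \sigma$, and hence $\pair a b = \pair x y$ as required.

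The main obstacle I anticipate is making the combinatorial argument of the last paragraph airtight; in particular, one must verify that the ``every walk uses the $u$-path in reverse'' claim is unaffected by a possible coincidence of some $u_i$ with an endpoint, by noting that such a coincidence only shortens the available walks without introducing a new backwards edge across the pair $\pair{u_{i'-1}}{u_{i'}}$. Apart from this bookkeeping, the proof is a direct unpacking of the definitions of join and transitive closure together with the disjoint-supports structure of the atoms in \eqref{eq:wsZlshRnGpcngRszq}.
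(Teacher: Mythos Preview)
Your argument follows the paper's exactly: establish $q(x,y)\subseteq\sigma\cap\tau$ by transitivity, bound $\sigma\cap\tau\subseteq e(x,y)$ (the paper writes this inequality directly as \eqref{eq:sJflSbbTbfgrRSl}; your $U\cap V=\{x,y\}$ reasoning is the same thing unpacked), and then rule out $\pair y x$ using the one-way $q$-edge.

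The worry you flag in your last paragraph is a genuine gap, however, and your proposed fix does not work. If an interior $u_i$ coincides with an endpoint, a backwards edge across the critical pair \emph{can} appear: take $k=3$, $(u_0,u_1,u_2,u_3)=(x,a,x,y)$ with $x,a,y$ pairwise distinct, $p_1=q$, $p_2=p_3=e$, and $s=1$ with $r_1=e$. All stated hypotheses are satisfied, yet $\sigma=q(x,a)\vee e(a,x)\vee e(x,y)=e(x,a)\vee e(x,y)$ contains $\pair y x$, and the right-hand side of \eqref{eq:wzBmnGVshszTTRkTv} equals $e(x,y)\neq q(x,y)$. The paper's one-sentence justification glosses over the same point; the observation as literally stated needs the extra hypothesis $\{x,y\}\cap\{u_1,\dots,u_{k-1}\}=\emptyset$ (and symmetrically for the $v$-path). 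That extra hypothesis holds in every use of \eqref{eq:wzBmnGVshszTTRkTv} in the paper, so nothing downstream is affected, and with it your argument goes through verbatim.
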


Similar observations (sometimes under the name ``Circle Principle'') have previously been formulated in 
Cz\'edli~\cite{czgAtEqSmall}, \cite[Lemma 2.1]{czgFGQL2017},
Cz\'edli and Kulin~\cite[Lemma 2.5]{czgEEKgenEqu},
Kulin~\cite[Lemma 2.2]{KulinQ5},  
and have been used implicitly in 
Chajda and Cz\'edli~\cite{ChCzG},
Cz\'edli~\cite{czgFgenLargeASM} and \cite{czgEEKgenEqu},
and Tak\'ach~\cite{takach}. However, Observation~\ref{obs:DPPsTrs} is slightly stronger than its precursors.

The argument proving Observation~\ref{obs:DPPsTrs} runs as follows. We can assume that $x\neq y$.
Let $\rho$ denote the quasiorder given on the right of the equality sign in \eqref{eq:wzBmnGVshszTTRkTv}. 
Since the pair $\pair x y$ belongs to both meetands in \eqref{eq:wzBmnGVshszTTRkTv} by transitivity, the inequality $q(x,y)\leq \rho$ is clear. Since $\set{u_1,\dots, u_{k-1}}\cap\set{ v_1,\dots, v_{s-1}}=\emptyset$, we obtain that 
\begin{equation}
\rho\leq \Bigl( \bigvee_{i=1}^k e(u_{i-1},u_i)\Bigr)\wedge
\Bigl(\bigvee_{j=1}^s e(v_{j-1},v_j)\Bigr)=e(x,y).
\label{eq:sJflSbbTbfgrRSl}
\end{equation}
Using that the existence of $i'$ or $j'$ 
together with $|\set{u_1,\dots, u_{k-1}}|=k-1$, and $|\set{v_1,\dots, v_{s-1}}|=s-1$
easily exclude that $\pair y x\in \rho$, \eqref{eq:sJflSbbTbfgrRSl} implies that $\rho\leq q(x,y)$. Combining this with the previously established converse inequality, we conclude \eqref{eq:wzBmnGVshszTTRkTv} and the validity of Observation~\ref{obs:DPPsTrs}. Note that, for brevity, we will often reference \eqref{eq:wzBmnGVshszTTRkTv} rather than Observation~\ref{obs:DPPsTrs}.
 
Next, resuming the proof of Theorem~\ref{thm:wwQhTszNsJk}, 
let $S$ denote the sublattice generated by $\set{\alpha,\beta,\gamma,\delta}$ in $\Quo 6=\Quo{\set{a,b,c,d,f,g}}$.
Since $S$ is closed with respect to $\wedge$ and $\vee$, it will be clear that the quasiorders given on the left of the equality signs below in \eqref{gd:eq:BrNaa}--\eqref{gd:eq:BrNcb} all belong to $S$, provided we use quasiorders already in $S$ on the right of our equality signs. (To see that the quasiorders on the right are in $S$, we will mention the relevant earlier equations except, possibly, \eqref{gd:eq:FgWRmkMjs}.)
We also need to check that the equalities we claim below hold in $\Quo 6$; we are going to check this either with the help of the diagrams given on the right of the equalities in question, or this will prompt follow from \eqref{eq:wzBmnGVshszTTRkTv}. After these instructions how to read the displayed equations below, we are ready to compute; the details are easy to follow provided \eqref{gd:eq:FgWRmkMjs} is in the reader's visual field.


\setlength{\jot}{10pt}
\allowdisplaybreaks
{\begin{align}
&e(b,c)=\beta\wedge\delta\text{ by \eqref{gd:eq:FgWRmkMjs};}
&&
\begin{matrix}
\blk{a\phantom{f}}&\blk{d,f,g}\cr
|&\cr
\blk{b,c}
\end{matrix} 
\mmm 
\begin{matrix}
\blk{b,c,g}&\blk{a,f}
\end{matrix}\,.
      \label{gd:eq:BrNaa}\\
&q(b,a)=\beta\wedge\gamma\text{ by \eqref{gd:eq:FgWRmkMjs};}
&&
\begin{matrix}
\blk{a\phantom{f}}&\blk{d,f,g}\cr
|&\cr
\blk{b,c}
\end{matrix} 
\mmm 
\begin{matrix}
\blk{a,b,d}&\blk{c,f}
\end{matrix}\,.
       \label{gd:eq:BrNab}\\ 
&\parbox{\ncm}{$e(d,f)=\alpha\wedge(\gamma\vee e(b,c))$ 
by \eqref{gd:eq:FgWRmkMjs} and \eqref{gd:eq:BrNaa};}
&&
\blk{d,f,g} \mmm \blk{a,b,d,c,f} \,.
       \label{gd:eq:BrNac}\\ 
&\parbox{\ncm}{$q(g,f)=\alpha\wedge(\delta\vee q(b,a))$ 
by \eqref{gd:eq:FgWRmkMjs} and \eqref{gd:eq:BrNab};}
&&
\blk{d,f,g} \mmm 
\begin{matrix}
\blk{a,f}\cr
|\cr
\blk{b,c,g}
\end{matrix} \,.
      \label{gd:eq:BrNad}\\ 
&\parbox{\ncm}{$e(a,d)=\gamma\wedge(e(d,f)\vee \delta)$ 
by \eqref{gd:eq:FgWRmkMjs} and \eqref{gd:eq:BrNac};}
&&
\begin{matrix} \blk{a,b,d}&\blk{c,f} \end{matrix}
\mmm \begin{matrix} \blk{b,c,g}&\blk{a,f,d} \end{matrix} \,.
      \label{gd:eq:BrNae}\\ 
&\parbox{\ncm}{$q(g,c)=\delta\wedge(q(g,f)\vee \gamma)$ 
by \eqref{gd:eq:FgWRmkMjs} and \eqref{gd:eq:BrNad};}
&&
\begin{matrix} \blk{b,c,g}&\blk{a,f} \end{matrix} \mmm
\begin{matrix}
\blk{a,b,d}&\blk{c,f}\cr
&|\cr
&\blk{g}
\end{matrix}\,.
      \label{gd:eq:BrNaf}\\ 
&\parbox{\ncm}{$e(a,f)=\delta\wedge(e(a,d) \vee e(d,f))$ 
by \eqref{gd:eq:FgWRmkMjs}, \eqref{gd:eq:BrNae}, and \eqref{gd:eq:BrNac};}
&&
\begin{matrix} \blk{b,c,g}&\blk{a,f} \end{matrix} \mmm
\blk{a,d,f}\,.
      \label{gd:eq:BrNag}\\ 
&\parbox{\ncm}{$q(g,a)=(q(g,f)\vee e(f,a))\wedge(q(g,c)\vee e(c,b)\vee q(b,a))$}
&&
\parbox{\ocm}{by \eqref{eq:wzBmnGVshszTTRkTv}, \eqref{gd:eq:BrNad},  \eqref{gd:eq:BrNag},  \eqref{gd:eq:BrNaf},  \eqref{gd:eq:BrNaa}, and \eqref{gd:eq:BrNab}.}
      \label{gd:eq:BrNah}\\ 
&\parbox{\ncm}{$q(g,d)=(q(g,f)\vee e(f,d))\wedge(q(g,a)\vee e(a,d))$}
&&
\parbox{\ocm}{by  \eqref{eq:wzBmnGVshszTTRkTv},  \eqref{gd:eq:BrNad}, \eqref{gd:eq:BrNac}, \eqref{gd:eq:BrNah}, and \eqref{gd:eq:BrNae}.}
      \label{gd:eq:BrNai}\\ 
&\parbox{\ncm}{$q(b,d)=(q(b,a)\vee e(a,d))\wedge(\delta\vee q(g,d))$ 
by \eqref{gd:eq:BrNab}, \eqref{gd:eq:BrNae}, and \eqref{gd:eq:BrNai};}
&&
\begin{matrix} \blk{a,d}\cr |\cr \blk{b} \end{matrix} \mmm
\begin{matrix} \blk{d}\cr |\cr \blk{b,c,g}&\blk{a,f} \end{matrix}\,.
      \label{gd:eq:BrNba}\\ 
&\parbox{\ncm}{$q(g,b)=(q(g,c)\vee e(c,b))\wedge(q(g,d)\vee\gamma)$ 
by \eqref{gd:eq:BrNaf}, \eqref{gd:eq:BrNaa}, and \eqref{gd:eq:BrNai};}
&&
\begin{matrix}\blk{c,b}\cr | \cr\blk{g}\end{matrix}\mmm
\begin{matrix} \blk{a,b,d}&\blk{c,f}\cr |\cr \blk{g} \end{matrix}\,.
      \label{gd:eq:BrNbb}\\ 
&\parbox{\ncm}{$q(b,f)=(q(b,a)\vee e(a,f))\wedge(q(b,d)\vee e(d,f))$}
&&
\parbox{\ocm}{by  \eqref{eq:wzBmnGVshszTTRkTv},  \eqref{gd:eq:BrNab}, \eqref{gd:eq:BrNag}, \eqref{gd:eq:BrNba}, and \eqref{gd:eq:BrNac}.}
      \label{gd:eq:BrNbc}\\ 
&\parbox{\ncm}{$q(c,f)=(e(c,b)\vee q(b,f))\wedge \gamma$ 
by \eqref{gd:eq:BrNaa} and \eqref{gd:eq:BrNbc};}
&&
\begin{matrix}\blk{f}\cr | \cr\blk{c,b}\end{matrix} \mmm 
\begin{matrix} \blk{a,b,d}&\blk{c,f} \end{matrix}\,.
      \label{gd:eq:BrNbd}\\ 
&\parbox{\ncm}{$q(b,c)=e(b,c)\wedge(q(b,f)\vee \gamma)$ by 
\eqref{gd:eq:BrNaa} and \eqref{gd:eq:BrNbc};}
&&
\blk{b,c}\mmm
\begin{matrix}\blk{c,f}\cr | \cr\blk{a,b,d}\end{matrix}\,.
      \label{gd:eq:BrNbe}\\ 
&\parbox{\ncm}{$q(d,f)=e(d,f)\wedge(q(b,f)\vee\gamma) $ by 
\eqref{gd:eq:BrNac} and \eqref{gd:eq:BrNbc};}
&&
\blk{d,f}\mmm
\begin{matrix}\blk{c,f}\cr | \cr\blk{a,b,d}\end{matrix}\,.
      \label{gd:eq:BrNbf}\\ 
&\parbox{\ncm}{$q(a,f)=e(a,f)\wedge(q(b,f)\vee\gamma) $ by 
\eqref{gd:eq:BrNag} and \eqref{gd:eq:BrNbc};}
&&
\blk{a,f}\mmm
\begin{matrix}\blk{c,f}\cr | \cr\blk{a,b,d}\end{matrix}\,.
      \label{gd:eq:BrNbg}\\ 
&\parbox{\ncm}{$q(d,a)=e(d,a)\wedge(q(d,f)\vee e(f,a))$} 
&&
\parbox{\ocm}{by \eqref{eq:wzBmnGVshszTTRkTv},   \eqref{gd:eq:BrNae},  \eqref{gd:eq:BrNbf}, and \eqref{gd:eq:BrNag}.}
      \label{gd:eq:BrNbh}\\ 
&\parbox{\ncm}{$q(f,a)=e(f,a)\wedge(e(f,d)\vee q(d,a))$} 
&&
\parbox{\ocm}{by \eqref{eq:wzBmnGVshszTTRkTv},   \eqref{gd:eq:BrNag}, \eqref{gd:eq:BrNac}, and \eqref{gd:eq:BrNbh}.}
      \label{gd:eq:BrNbi}\\ 
&\parbox{\ncm}{$q(b,g)=(q(b,f)\vee\alpha)\wedge \delta$ by 
\eqref{gd:eq:BrNbc};}
&&
\begin{matrix}\blk{d,f,g}\cr | \cr\blk{b}\end{matrix}
\mmm
\begin{matrix} \blk{b,c,g}&\blk{a,f} \end{matrix}\,.
      \label{gd:eq:BrNbj}\\ 
&\parbox{\ncm}{$q(a,d)=e(a,d)\wedge(q(a,f)\vee e(f,d))$} 
&&
\parbox{\ocm}{by \eqref{eq:wzBmnGVshszTTRkTv},   \eqref{gd:eq:BrNae},  \eqref{gd:eq:BrNbg}, and \eqref{gd:eq:BrNac}.}
      \label{gd:eq:BrNbk}\\ 
&\parbox{\ncm}{$q(f,d)=e(f,d) \wedge(q(f,a)\vee q(a,d))$} 
&&
\parbox{\ocm}{by \eqref{eq:wzBmnGVshszTTRkTv},   \eqref{gd:eq:BrNac},  \eqref{gd:eq:BrNbi}, and \eqref{gd:eq:BrNbk}.}
      \label{gd:eq:BrNbl}\\ 
&\parbox{\ncm}{$q(c,g)=(e(c,b)\vee q(b,g))\wedge(q(c,f)\vee\alpha )$ by 
\eqref{gd:eq:BrNaa}, \eqref{gd:eq:BrNbj}, and \eqref{gd:eq:BrNbd};}
&&
\begin{matrix}\blk{g}\cr | \cr\blk{c,b}\end{matrix} \mmm
\begin{matrix}\blk{d,f,g}\cr | \cr\blk{c}\end{matrix} \,.
      \label{gd:eq:BrNbm}\\ 
&\parbox{\ncm}{$q(c,b)=(q(c,g)\vee q(g,b))\wedge e(c,b)$} 
&&
\parbox{\ocm}{by \eqref{eq:wzBmnGVshszTTRkTv},   \eqref{gd:eq:BrNbm},  \eqref{gd:eq:BrNbb}, and  \eqref{gd:eq:BrNaa}. }
      \label{gd:eq:BrNbn}\\ 
&\parbox{\ncm}{$q(f,g)=\alpha\wedge(\gamma\vee q(c,g))$ by 
\eqref{gd:eq:BrNbm};}
&&
\blk{d,f,g} \mmm
\begin{matrix}&\blk{g}\cr &| \cr\blk{a,b,d}&\blk{c,f}\end{matrix}\,.
      \label{gd:eq:BrNca}\\ 
&\parbox{\ncm}{$q(a,b)=(q(a,f)\vee q(f,g)\vee q(g,b))\wedge\gamma$ by 
\eqref{gd:eq:BrNbg}, \eqref{gd:eq:BrNca}, and \eqref{gd:eq:BrNbb};}
&&
\begin{matrix}\blk{b}\cr | \cr\blk{g}\cr | \cr\blk{f}\cr | \cr\blk{a}\end{matrix} \mmm 
\begin{matrix} \blk{a,b,d}&\blk{c,f} \end{matrix}\,.
      \label{gd:eq:BrNcb}
\end{align}%
}%

In the rest of the proof, we only need the twelve atoms of $\Quo 6=\Quo A$ that are indicated in  Figure~\ref{figbfcircle}. 
Using that these twelve atoms belong to $S$, we conclude  by \eqref{eq:wzBmnGVshszTTRkTv} that, for 
all $x,y\in A$, the quasiorder $q(x,y)$ belongs to $S$ as well. Hence, it follows from \eqref{dg:eq:wbhWrnGtrSg} that $\rho\in S$ for all $\rho\in \Quo A$. Consequently, $S=\Quo A$ and $\set{\alpha,\beta,\gamma,\delta}$ is a generating set of $\Quo A$. Since $\set{\alpha,\beta,\gamma,\delta}$  is a $(1+1+2)$-subset of $\Quo A$, the proof of Theorem~\ref{thm:wwQhTszNsJk} is complete.
\end{proof}

\begin{figure}[ht]
\centerline
{\includegraphics[scale=1]{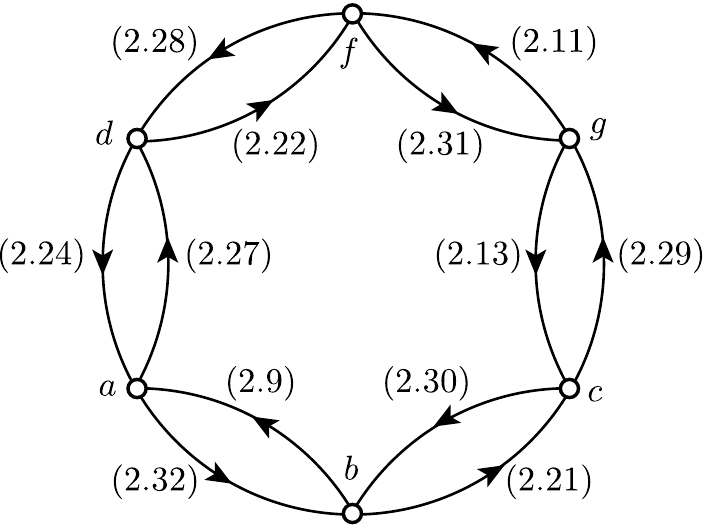}}
\caption{Twelve atoms of $\Quo A$}\label{figbfcircle}
\end{figure}

In addition to the fact that the Disjoint Paths Principle (see Observation~\ref{obs:DPPsTrs}) has played  an important role in the proof above, this principle is also useful to simplify the proof of the following lemma. This lemma  is implicit in Kulin~\cite{KulinQ5}; see the proof of Part (i) of Theorem 2.1 there.

\begin{lemma}[Kulin \cite{KulinQ5}]\label{lemma:KEndWzQ} If $A$ is a set consisting of at least three elements and $\rho$ belongs to $\Quo A\setminus \Equ A$, then $\Equ A\cup \set\rho$ generates the lattice $\Quo A$.
\end{lemma}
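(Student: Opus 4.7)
The plan is to show that the sublattice $L := \langle \Equ A \cup \set{\rho}\rangle$ contains every atom $q(x,y)$ (with $x\neq y$) of $\Quo A$; once this is done, \eqref{dg:eq:wbhWrnGtrSg} immediately yields $L = \Quo A$. Since $\rho \in \Quo A \setminus \Equ A$, the relation $\rho$ is not symmetric, so I fix distinct $a, b \in A$ with $\pair a b \in \rho$ and $\pair b a \notin \rho$. The bootstrap step is the observation that $q(a,b) = \rho \wedge e(a,b)$, because $e(a,b) = \set{\pair a b, \pair b a} \cup \Delta$ meets $\rho$ precisely in $\set{\pair a b} \cup \Delta$. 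Hence $q(a,b) \in L$.

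Next, using $|A|\geq 3$, I pick $c \in A \setminus \set{a,b}$ and apply Observation~\ref{obs:DPPsTrs} three times to reverse direction: first,
\[
q(c,b) = e(c,b) \wedge \bigl(e(c,a) \vee q(a,b)\bigr)
\]
with the disjoint paths $c \to b$ and $c \to a \to b$; then
\[
q(c,a) = e(c,a) \wedge \bigl(q(c,b) \vee e(b,a)\bigr)
\]
with $c \to a$ and $c \to b \to a$; and finally
\[
q(b,a) = e(b,a) \wedge \bigl(e(b,c) \vee q(c,a)\bigr)
\]
with $b \to a$ and $b \to c \to a$. So both $q(a,b)$ and $q(b,a)$ lie in $L$.

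With both orientations between $a$ and $b$ available, producing $q(x,y)$ for an arbitrary ordered pair of distinct elements of $A$ becomes a short case analysis via Observation~\ref{obs:DPPsTrs}: one pairs the length-one path $x \to y$ (via $e(x,y)$) with the path $x \to p \to q \to y$, where $(p,q) \in \set{(a,b), (b,a)}$ is chosen so that the interior of the second path is disjoint from $\set{x,y}$. In the generic case the four vertices $x, p, q, y$ are distinct; when $x$ or $y$ already coincides with one of $a, b$, the path shortens by collapsing the corresponding $e$-edge into $\Delta$, and the same computation goes through. Having obtained every atom $q(x,y)$ of $\Quo A$ inside $L$, \eqref{dg:eq:wbhWrnGtrSg} yields $L = \Quo A$, as wanted.

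The step I expect to be the main obstacle is the reversal of orientation: no join of $q(a,b)$ with equivalences alone can produce $q(b,a)$, because any such join still contains $\pair a b$, whereas $q(b,a)$ does not. The three-step rotation through an auxiliary element $c$ circumvents this by using meets to carve out the reverse direction, and this is precisely where the hypothesis $|A|\geq 3$ enters the proof.
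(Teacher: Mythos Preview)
Your proof is correct. Both your argument and the paper's rely on the Disjoint Paths Principle after extracting the single atom $q(a,b)=e(a,b)\wedge\rho$ from $\rho$, but the propagation step differs. The paper arranges $A$ as the vertices of an $n$-gon and observes that one counterclockwise directed edge forces (via \eqref{eq:wzBmnGVshszTTRkTv}) the clockwise versions of all other edges, and vice versa; two rounds of this yield every directed edge of the polygon, and then every atom. You instead keep only the three elements $a,b,c$, perform a three-step rotation through $c$ to obtain $q(b,a)$ as well, and then use the pair $q(a,b),q(b,a)$ as a universal pivot to reach an arbitrary $q(x,y)$ in one further application of \eqref{eq:wzBmnGVshszTTRkTv}. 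Your route is slightly more hands-on in the endgame (the small case split on whether $x$ or $y$ lies in $\{a,b\}$), whereas the polygon picture absorbs that bookkeeping into the cyclic symmetry; on the other hand, your argument is entirely local and never needs to enumerate or order the remaining elements of $A$. Your closing remark that no join of $q(a,b)$ with equivalences can contain $q(b,a)$ pinpoints exactly why a meet through an auxiliary element is unavoidable, and hence why $|A|\geq 3$ is essential.
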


Since $\Equ{\set{a,b}}\cup \set{q(a,b)}$, which is a three-element chain, is a proper sublattice of $\Quo{\set{a,b}}$, the stipulation that $A$ has at least three elements cannot be omitted from Lemma~\ref{lemma:KEndWzQ}. For the reader's convenience and also to demonstrate the power of the Disjoint Paths Principle, we are going to present a new
 proof of this lemma.

\begin{proof}[Proof of Lemma~\ref{lemma:KEndWzQ}] 
We can assume that $A$ consists of the vertices $a_0,a_1,\dots, a_{n-1}$, listed counterclockwise, of a regular $n$-gon such that $\tuple{a_0,a_1}\in \rho$ but $\tuple{a_1,a_0}\notin \rho$. This $n$-gon is non-degenerate since $n=|A|\geq 3$.  If $i,j\in\set{0,\dots,n-1}$ and $j\equiv i+1$
(mod $n$), then $\set{a_i,a_j}$, $\tuple{a_i,a_j}$, and $\tuple{a_j,a_i}$ are called an \emph{undirected edge}, a \emph{counterclockwise edge}, and a \emph{clockwise edge} of the $n$-gon, respectively.

Let $S$ denote the sublattice of $\Quo A$ generated by $\Equ A\cup \set\rho$. Then all the undirected edges
are in $S$, which means that $e(a_i,a_j)\in S$ for all $i,j\in\set{0,\dots,n-1}$ with $j\equiv i+1$. We say that the counterclockwise version and the clockwise version of an edge $\set{a_i,a_j}$ are in $S$ if 
$q(a_i,a_j)\in S$ and $q(a_j,a_i)\in S$, respectively. 
It follows from \eqref{eq:wzBmnGVshszTTRkTv} that if all the counterclockwise edges and all the clockwise edges of the $n$-gon are in $S$, then all the atoms of $\Quo A$ are in $S$ and so $S=\Quo A$ by \eqref{dg:eq:wbhWrnGtrSg}. 
It also follows from \eqref{eq:wzBmnGVshszTTRkTv} that
if the counterclockwise version of an (undirected) edge belongs to $S$, then the clockwise versions of all other 
edges are in $S$. Combining this fact with its counterpart in which the two directions are interchanged,
we obtain that if at least one directed edge is in $S$, then all directed edges are in $S$ and $S=\Quo A$. 
Therefore, using that  $\tuple{a_0,a_1}\in \rho$ but $\tuple{a_1,a_0}\notin \rho$ lead to $q(a_0,a_1)=e(a_0,a_1)\wedge \rho\in S$, we obtain the statement of the Lemma.
\end{proof}

\begin{corollary}\label{corol:quo3}
$\Quo 3$ is $(1+1+2)$-generated.
\end{corollary}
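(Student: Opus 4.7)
The plan is to exhibit an explicit four-element subset of $\Quo 3$ and then invoke the Kulin-style reduction provided by Lemma~\ref{lemma:KEndWzQ}. With $A=\set{a,b,c}$, I would take
\[
X:=\set{q(a,b),\,e(a,b),\,e(a,c),\,e(b,c)}\subseteq \Quo 3,
\]
combining one of the three equivalence atoms with its $q$-counterpart and the other two equivalence atoms.

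First I would verify that $X$ is a $(1+1+2)$-subset of $\Quo 3$. The containment $q(a,b)\subsetneq e(a,b)$, visible from \eqref{eq:wsZlshRnGpcngRszq}, produces one comparable pair. For the remaining five pairs, I would simply inspect the underlying subsets of $A^2$: the three atoms $e(a,b),\,e(a,c),\,e(b,c)$ of $\Equ A$ are pairwise distinct atoms of $\Quo A$, hence pairwise incomparable; and $q(a,b)$ is incomparable with each of $e(a,c)$ and $e(b,c)$ because the pair $\pair a b$ lies in $q(a,b)$ but not in those two equivalences, while, conversely, $\pair a c$ and $\pair b c$ lie outside $q(a,b)$.

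Next I would show that $X$ generates $\Quo 3$. The three atoms $e(a,b),\,e(a,c),\,e(b,c)$ already generate the entire equivalence lattice $\Equ 3$, since $\Equ 3$ is the five-element lattice whose remaining elements are $\Delta=e(a,b)\wedge e(a,c)$ and the all-relation $\nabla=e(a,b)\vee e(a,c)$, and similarly for any other pair of these atoms. Because $q(a,b)\in\Quo 3\setminus\Equ 3$, Lemma~\ref{lemma:KEndWzQ} then gives that $\Equ 3\cup\set{q(a,b)}$, and hence $X$, generates $\Quo 3$.

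There is no real obstacle here beyond choosing the fourth element carefully: the non-equivalence quasiorder adjoined to the three equivalence atoms must be comparable with exactly one of them (not two, not zero), so that the resulting four-element subset has the prescribed $(1+1+2)$ shape. The asymmetric atom $q(a,b)$, sitting strictly below $e(a,b)$ and incomparable with the other two equivalence atoms, is precisely what makes this work.
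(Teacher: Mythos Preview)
Your proof is correct and follows essentially the same route as the paper's: exhibit the set $\set{q(a,b),\,e(a,b),\,e(a,c),\,e(b,c)}$, note that the three equivalence atoms generate $\Equ 3$, and then apply Lemma~\ref{lemma:KEndWzQ}. One small slip: the $e(x,y)$ are atoms of $\Equ A$ but \emph{not} of $\Quo A$ (the atoms of $\Quo A$ are the $q(x,y)$); their pairwise incomparability is still immediate, since the order on both lattices is set inclusion.
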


\begin{proof}
Since $\Equ 3 =\Equ{\set{a,b,c}}$ is generated by the set $\set{e(a,b),e(b,c),e(c,a)}$ of its atoms,
$\set{q(a,b),e(a,b),e(b,c),e(c,a)}$ is a  $(1+1+2)$-generating set of the lattice $\Quo 3=\Quo{\set{a,b,c}}$ by  Lemma~\ref{lemma:KEndWzQ}.
\end{proof}

\section{$(1+1+2)$-generating sets $\Quo n$ for $n=11$ and $n\geq 13$}\label{sect:bign} 
The basic concept in this section is the following; the notations $e(x,y)$ and $q(x,y)$ introduced in \eqref{eq:wsZlshRnGpcngRszq} are still in effect.

\begin{definition}[Z\'adori configuration]\label{def:zadoriconfig}
For $2\leq k\in\Nplu$, let $a_0$, $a_1$, \dots, $a_k$, $b_0$, $b_1$, \dots, $b_{k-1}$ be pairwise distinct elements of a finite set $B$. Let
\begin{equation}
\begin{aligned}
\alpha&=\bigvee_{i=1}^{k}e(a_{i-1},a_{i})\vee \bigvee_{i=1}^{k-1}e(b_{i-1},b_{i}),\quad\beta=\bigvee_{i=0}^{k-1}e(a_i,b_i)\cr
\gamma&=\bigvee_{i=1}^{k}e(a_{i},b_{i-1}),\quad 
\epsilon_0=e(a_0,b_0),\quad\text{and}\quad \eta=e(a_k,b_{k-1});
\end{aligned}
\label{eq:sZwzZtrJFcmSh}
\end{equation}
they are members of $\Equ B$. The system of these $2k+1$ elements and five equivalences of $B$ is called a \emph{Z\'adori configuration} of (odd) size $2k+1$ in $B$. The set 
\begin{equation}
A:=\set{a_0,\dots,a_k,b_0,\dots, b_{k-1}}
\label{eq:sSprthRszmRt}
\end{equation}
is the \emph{support} of this configuration.
\end{definition}

A Z\'adori configuration is easy to visualize; following Z\'adori's original drawing, we do this with the help of a graph in the following way. We say that a path in a graph is horizontal, is of slope $1$, and is of slope $-1$ if all of the edges constituting the path are such. 
For vertices $x$ and $y$ in the graph,
\begin{equation}
\begin{aligned}
\pair x y \in\alpha &\defiff \text{there is a horizontal path from $x$ to $y$};
\cr
\pair x y \in\beta &\defiff \text{there is a path of slope $-1$  from $x$ to $y$};
\cr
\pair x y \in\gamma &\defiff \text{there is a path of slope $1$  from $x$ to $y$};
\end{aligned}
\label{eq:hWTlszBrt}
\end{equation}
note that a path of length 0 is simultaneously of slope 1 and of slope $-1$, and it is also horizontal.
Also, note that \eqref{eq:hWTlszBrt} complies with \eqref{eq:sZwzZtrJFcmSh}.

For example, a Z\'adori configuration of size $11$ is given in Figure~\ref{fig11}; disregard the dashed curved edges for a while. Some of the horizontal edges are labeled by $\alpha$ but, to avoid crowdedness, not all. The same convention applies  for edges of slope $-1$ and $\beta$, and edges of slope $1$ and $\gamma$.

Z\'adori configurations played a decisive role in all papers that applied extensions of Z\'adori's method; see Subsection~\ref{ssect:history} for the list of these papers. 
Given a Z\'adori configuration in $B$ with support set $A$, see \eqref{eq:sZwzZtrJFcmSh}--\eqref{eq:sSprthRszmRt}, we define 
\begin{equation}
\Equ{\restrict B A}:=
\set{\theta\in\Equ B: \text{if } \pair x y\in\theta\text{ and }\set{x,y}\not\subseteq A,\text{ then }x=y},
\label{eq:wMwtGsRnBZt}
\end{equation}
where $x/\theta$ is the $\theta$-block of $x$. In Z\'adori~\cite{zadori}, this configuration and the following lemma assumed that $B=A$. However, this assumption is not a real restriction since the map
\begin{equation}
\Equ{\restrict B A}\to \Equ A\text{ defined by }
\theta\mapsto \theta\cap(A\times A)
\label{eq:QGptFlLcSzpDgR}
\end{equation}
is clearly an isomorphism, whereby the validity of the following lemma follows from its original particular case $B=A$.

\begin{lemma}[Z\'adori~\cite{zadori}]\label{lemma:ZdrlMM}
Assume that a Z\'adori configuration of size $2k+1$ with support $A$ is given in $B$; see \eqref{def:zadoriconfig} and \eqref{eq:sSprthRszmRt}. Then $\set{\alpha,\beta,\gamma,\epsilon_0,\eta_k}$ generates $
\Equ{\restrict B A}$.
\end{lemma}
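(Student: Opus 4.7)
My plan is to invoke the isomorphism \eqref{eq:QGptFlLcSzpDgR} to reduce the problem to showing that $\alpha,\beta,\gamma,\epsilon_0,\eta$, viewed in $\Equ A$, generate $\Equ A$. Let $S$ denote the sublattice of $\Equ A$ they generate; by \eqref{dg:eq:wbhWrnGtrSg}, $\Equ A$ is join-generated by its atoms $e(x,y)$, so it will suffice to prove $e(x,y)\in S$ for each $x\neq y$ in $A$.

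First I would isolate the edge atoms attached to the two ``corner triangles'' $\{a_0,a_1,b_0\}$ and $\{a_{k-1},a_k,b_{k-1}\}$ of the configuration. Reading block partitions off \eqref{eq:hWTlszBrt}, direct verification gives
\[
e(a_0,a_1)=\alpha\wedge(\gamma\vee\epsilon_0),\quad
e(a_1,b_0)=\gamma\wedge\bigl(e(a_0,a_1)\vee\epsilon_0\bigr),
\]
together with the mirror identities $e(a_{k-1},a_k)=\alpha\wedge(\beta\vee\eta)$ and $e(a_{k-1},b_{k-1})=\beta\wedge\bigl(e(a_{k-1},a_k)\vee\eta\bigr)$.

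Next I would peel off every remaining ``short'' edge atom of the Z\'adori graph --- the $e(a_i,a_{i+1})$, $e(b_i,b_{i+1})$, $e(a_i,b_i)$, and $e(a_{i+1},b_i)$ --- by a two-sided induction working inward from both ends simultaneously. Each step meets one of $\alpha,\beta,\gamma$ with a judiciously chosen join of already-isolated atoms so that the intersection of the two partitions singles out exactly one new non-trivial orbit. Once all short edge atoms lie in $S$, I would observe that the Z\'adori graph on $A$ is $2$-connected for $k\geq 2$, so Menger's theorem provides, for any $x\neq y$ in $A$, two internally vertex-disjoint paths from $x$ to $y$. The equivalence-lattice analogue of the Disjoint Paths Principle (derivable from Observation~\ref{obs:DPPsTrs} by symmetrization, or directly: the two joins have one non-trivial block each, and those blocks intersect in $\{x,y\}$) then expresses $e(x,y)$ as the meet of the joins of short edge atoms along the two paths. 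This places every atom of $\Equ A$ in $S$, and combined with \eqref{dg:eq:wbhWrnGtrSg}, forces $S=\Equ A$.

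The delicate part is the middle stage of the induction. A computation such as $\alpha\wedge\bigl(\beta\vee e(a_1,b_0)\bigr)$ outputs the two-atom join $e(a_0,a_1)\vee e(b_0,b_1)$ rather than a single short edge atom, and no further meet against only the generators and left-side atoms will pry $e(b_0,b_1)$ loose from $e(a_0,a_1)$. The resolution, which is the essence of Z\'adori's method, is to synchronize the left-peeling (descended from $\epsilon_0$) with the right-peeling (descended from $\eta$), so that every two-atom join emitted on one side is eventually intersected with a one-atom filter supplied by the opposite side. Verifying that this synchronization terminates and captures every short edge atom --- especially in the middle layer, which is parity-sensitive when $k$ is small --- is where the main technical work of the argument lies.
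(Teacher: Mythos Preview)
The paper does not give its own proof of this lemma; it attributes the result to Z\'adori~\cite{zadori} and remarks that it is stated or reproved in several later references. So there is no in-paper argument to compare against line by line.

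That said, your outline is a faithful sketch of Z\'adori's original method. The reduction via the isomorphism \eqref{eq:QGptFlLcSzpDgR} is exactly how the paper justifies passing from $B$ to $A$; your corner identities $e(a_0,a_1)=\alpha\wedge(\gamma\vee\epsilon_0)$, $e(a_1,b_0)=\gamma\wedge(e(a_0,a_1)\vee\epsilon_0)$ and their mirrors are correct; the $2$-connectivity of the configuration graph together with the Circle/Disjoint-Paths Principle is the standard way to finish once all short edge atoms are in hand. Your diagnosis of the obstacle --- that one-sided peeling produces two-atom joins like $e(a_0,a_1)\vee e(b_0,b_1)$ that cannot be split without input from the opposite end --- is also the right picture.

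What remains is that your proposal is a plan rather than a proof: you explicitly flag the synchronized two-sided induction as ``where the main technical work lies'' without carrying it out. To complete it, one typically sets $\epsilon_i:=e(a_i,b_i)$ and $\eta_i:=e(a_i,b_{i-1})$ and proves by a single induction (moving $i$ up from $0$ and down from $k$ simultaneously) that all $\epsilon_i$ and $\eta_i$ lie in $S$, using formulas of the same shape as your corner identities; the horizontal atoms $e(a_{i-1},a_i)$ and $e(b_{i-1},b_i)$ then drop out immediately. Once that recursion is written down, the argument is routine --- but as it stands you have described the strategy rather than executed it.
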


Note that this lemma is explicitly stated in  Cz\'edli~\cite{czgdirpower} and Cz\'edli and Kulin~\cite{CzGKJconcise},
and implicitly proved (hidden in long proofs) in  Cz\'edli~\cite{czgAtEqSmall}, \cite{czgFgenLargeASM}, \cite{czgEEKgenEqu}, and \cite{czgFGQL2017}, and Cz\'edli and Oluoch~\cite{czgoluoch}.
Now we are in the position to state the result of the present section.

\begin{theorem}\label{thm:sBbszmK} 
Let $n\in\Nplu$ be a natural number.
\begin{enumeratei}
\item\label{thm:sBbszmKa} If $n\geq 11$ and $n$ is odd, then $\Quo n$ is $(1+1+2)$-generated.
\item \label{thm:sBbszmKb} If $n\geq 13$,  then $\Quo n$ is $(1+1+2)$-generated.
\end{enumeratei}
\end{theorem}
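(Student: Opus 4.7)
The plan is to build on the two key lemmas already established: Lemma~\ref{lemma:ZdrlMM}, which provides, from a Z\'adori configuration of size $2k+1$, five equivalences that generate the restricted equivalence lattice $\Equ{\restrict B A}$, and Lemma~\ref{lemma:KEndWzQ}, which says that adjoining any non-symmetric quasiorder to $\Equ A$ produces all of $\Quo A$. For each $n$ in the scope of the theorem I will exhibit four quasiorders $\alpha',\beta',\gamma',\delta'$ forming a $(1+1+2)$-subset such that the sublattice $S$ they generate (i)~contains the five Z\'adori equivalences and (ii)~contains a non-symmetric quasiorder. Combining the two lemmas then yields $S=\Quo n$.

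For part (\ref{thm:sBbszmKa}), fix $n=2k+1$ with $k\geq 5$ and place a Z\'adori configuration on $A=B$, using the equivalences $\alpha,\beta,\gamma,\epsilon_0,\eta$ of Definition~\ref{def:zadoriconfig}. I would take $\alpha':=\alpha$, $\beta':=\beta$, $\gamma':=\gamma$, and let $\delta'$ be a quasiorder that simultaneously encodes $\epsilon_0$, $\eta$ and one strictly directed edge, for instance $\delta':=\alpha\vee\epsilon_0\vee\eta\vee q(a_0,b_{k-1})$. Then $\alpha'<\delta'$ is a comparability by construction; the other five pairs are incomparable because the block structures of $\alpha,\beta,\gamma$ are pairwise transversal in a Z\'adori configuration, and because $\delta'$ is non-symmetric (which rules out $\delta'\leq\beta'$ and $\delta'\leq\gamma'$). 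To show $\epsilon_0,\eta\in S$, I would use Observation~\ref{obs:DPPsTrs}: the atom $q(a_0,b_0)$ is obtained as a meet of a path lying in $\delta'$ with a path lying in $\beta'$, whose interiors are disjoint; the reverse atom $q(b_0,a_0)$ is then $e(a_0,b_0)\wedge(\text{longer path})$, and $\epsilon_0=q(a_0,b_0)\vee q(b_0,a_0)$. An analogous argument via $\gamma'$ yields $\eta$. At that point Lemma~\ref{lemma:ZdrlMM} gives $\Equ A\subseteq S$, and since $\delta'\in S\setminus\Equ A$, Lemma~\ref{lemma:KEndWzQ} completes part (\ref{thm:sBbszmKa}).

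For part (\ref{thm:sBbszmKb}), since odd $n\geq 13$ is already contained in part (\ref{thm:sBbszmKa}), I only need to cover the even values $n\geq 14$. Write $n=(2k+1)+1$ with $k\geq 6$, place the Z\'adori configuration on a subset $A\subset B$ of size $2k+1$, and let $c$ be the single extra point of $B\setminus A$. The idea is to augment one of the four generators (most naturally $\delta'$, or alternatively $\gamma'$) by an additional edge incident with $c$, for instance replacing $\delta'$ with $\delta'\vee e(c,a_0)$ or $\delta'\vee q(c,a_0)$, chosen so that the $(1+1+2)$-pattern is preserved. Via the isomorphism \eqref{eq:QGptFlLcSzpDgR}, the Z\'adori machinery still recovers $\Equ{\restrict B A}$, and then a separate use of the Disjoint Paths Principle extracts the missing atoms $q(c,y)$, $q(y,c)$ for $y\in A$ by combining the new $c$-edge with paths inside $\alpha',\beta',\gamma'$. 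Together with Lemma~\ref{lemma:KEndWzQ} this gives $\Quo B=\Quo n$.

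The main obstacle I expect is the extraction of $\epsilon_0$ and $\eta$ from only four quasiorders: it is essentially free with Z\'adori's original five equivalences, but becomes delicate once two of them have been merged into the single quasiorder $\delta'$, because the meets used must isolate exactly one atom at a time and rely squarely on the disjointness condition of Observation~\ref{obs:DPPsTrs}. A secondary obstacle is the boundary cases $n=11$ and $n=14$, where the configuration is so tight that there is little room to route the disjoint paths, so some of the auxiliary equations may have to be written out explicitly in the spirit of the computations \eqref{gd:eq:BrNaa}--\eqref{gd:eq:BrNcb} of Section~\ref{sec:quosix}, rather than by a uniform argument that covers all $n$ at once.
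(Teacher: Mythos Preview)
Your high-level strategy---recover the five Z\'adori equivalences, apply Lemma~\ref{lemma:ZdrlMM}, then invoke Lemma~\ref{lemma:KEndWzQ} with a non-symmetric leftover---is exactly the paper's strategy. The gap is in your concrete choice of $\delta'$ for part~(\ref{thm:sBbszmKa}). Recall that $\alpha$ has only two blocks, $\{a_0,\dots,a_k\}$ and $\{b_0,\dots,b_{k-1}\}$, and $\epsilon_0=e(a_0,b_0)$ bridges them. Hence $\alpha\vee\epsilon_0$ is already the top element $A\times A$, so your $\delta'=\alpha\vee\epsilon_0\vee\eta\vee q(a_0,b_{k-1})$ is the top of $\Quo A$. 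This destroys the $(1+1+2)$ pattern (now $\beta',\gamma'<\delta'$ as well), and every meet you form with $\delta'$ is vacuous, so the extraction of $\epsilon_0$ and $\eta$ via Observation~\ref{obs:DPPsTrs} collapses.

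The paper avoids this by going the other way: it takes $\delta<\alpha$ rather than $\alpha<\delta'$. Concretely, $\delta=e(a_0,a_k)\vee e(b_0,b_{k-1})\vee q(b_1,b_{k-2})$ uses only long-range edges that stay \emph{inside} each $\alpha$-block, so $\delta<\alpha$ and $\delta$ is genuinely non-symmetric. The recovery of $\epsilon_0$ and $\eta$ is then done not by the Disjoint Paths Principle but by a sandwich argument: one checks $\beta\wedge(\delta^+\vee\gamma)=e(a_0,b_0)$ for the symmetrization $\delta^+$ of $\delta$, and since $\delta_\ast\leq\delta\leq\delta^+$ with $e(a_0,b_0)\leq\beta\wedge(\delta_\ast\vee\gamma)$, the middle term $\beta\wedge(\delta\vee\gamma)$ is forced to equal $e(a_0,b_0)$. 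Symmetrically $\eta=\gamma\wedge(\delta\vee\beta)$. For part~(\ref{thm:sBbszmKb}) the paper attaches $c$ to \emph{two} generators, setting $\beta^\sharp=\beta\vee e(b_1,c)$ and $\gamma^\sharp=\gamma\vee e(b_{k-3},c)$; attaching $c$ to only one generator, as you suggest, leaves no second disjoint path through $c$ with which to isolate the atoms $e(x,c)$.
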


\begin{figure}[ht]
\centerline
{\includegraphics[scale=1]{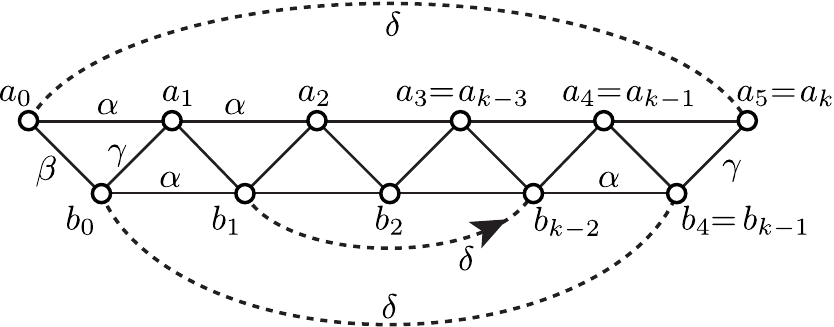}}
\caption{$\set{\alpha, \beta, \gamma, \delta}$ is a $(1+1+2)$-generating set of $\Quo {11}$}\label{fig11}
\end{figure}

To avoid complicated wording, the scope of part \eqref{thm:sBbszmKa} and that of  part \eqref{thm:sBbszmKb} are not disjoint.

\begin{proof}[Proof of Theorem \ref{thm:sBbszmK}]
First, to prove part \eqref{thm:sBbszmKa}, assume that $n=2k+1\geq 11$. Take a  Z\'adori configuration of size $2k+1$ as described in Definition~\ref{def:zadoriconfig}. Similarly to \eqref{eq:wMwtGsRnBZt}, we let
\begin{equation}
\Quo{\restrict B A}:=\set{\rho\in\Quo B: \text{ if }\pair x y\in\rho\text{ and }
\set{x,y}\not\subseteq A,\text{ then }x=y}.
\label{eq:wMwtKlrSWpMnfJ}
\end{equation}
Since the map 
\begin{equation}\Quo{\restrict B A}\to \Quo A\text{ defined by }
\rho\mapsto \rho\cap(A\times A)
\label{eq:QrvlZmsTrRf}
\end{equation}
is an isomorphism and $|A|=2k+1=n$, it suffices to show that 
$\Quo{\restrict B A}$ is $(1+1+2)$-generated.
(The advantage of not assuming that $B=A$ and working in 
$\Quo{\restrict B A}$ rather than in $\Quo A$ will be clear later in the proof of part  \eqref{thm:sBbszmKb} of the theorem.)
Define the following members of $\Quo{\restrict B A}$:
\begin{equation}
\begin{aligned}
\delta_\ast&:= e(a_0,a_k)\vee e(b_0,b_{k-1}),\quad
\delta:= \delta_\ast \vee  q(b_1,b_{k-2}),\text{ and}\cr
\delta^+&:= \delta_\ast \vee  e(b_1,b_{k-2})= \delta\vee  q(b_{k-2},b_1).
\end{aligned}
\label{eq:wGkRkKLjTlVhn}
\end{equation}
The quasiorders $\delta_\ast$ and $\delta^+$ are equivalences but $\delta$ is not. For $n=11$,  $\delta$ is visualized by dashed curved edges in Figure~\ref{fig11}. In addition to \eqref{eq:hWTlszBrt} and complying with \eqref{eq:wGkRkKLjTlVhn},  our convention for $\delta$ in  Figure~\ref{fig11} (and later in  Figure~\ref{fig14}) is that
\begin{equation}
\parbox{6cm}{$\pair x y\in\delta$ $\defiff$ there is a directed path of curved dashed edges from $x$ to $y$;}
\label{pbx:MzwkrdTss}
\end{equation}
the edges without arrow are directed in both ways.
Again, paths of length zero are permitted. By the peculiarities of $\delta$, the path in \eqref{pbx:MzwkrdTss} has to be of length 1 or 0.

Letting $S$ denote the sublattice generated by $\set{\alpha,\beta,\gamma,\delta}$ in $\Quo {\restrict B A}$, we are going to show that $S=\Quo {\restrict B A}$. Observe that
\begin{equation}
\parbox{7.3cm}{the blocks of $(\delta^+ \vee \gamma)$ are 
$\set{a_0,a_{1},a_k, b_0,b_{k-1}}$,  
$\set{a_2,a_{k-1},b_1,b_{k-2}}$, and the two-element sets  $\set{a_i,b_{i-1}}$ such that $3\leq i\leq k-2$.}
\label{pbx:wZsmnsZvGknHkGyTgl}
\end{equation}%
Hence, 
we obtain that $\beta\wedge(\delta^+ \vee \gamma)= e(a_0,b_0)$. Using that the lattice operations are isotone and 
$\pair {a_0}{b_0}$ belongs to the equivalence $\beta\wedge(\delta_\ast \vee \gamma)$, it follows from
\begin{equation}
e(a_0,b_0)\leq \beta\wedge(\delta_\ast \vee \gamma)
\leq \beta\wedge(\delta \vee \gamma)
\leq \beta\wedge(\delta^+ \vee \gamma)= e(a_0,b_0)
\label{eq:rNrNJGrTLhLBl}
\end{equation}
that $\epsilon_0:=e(a_0,b_0)= \beta\wedge(\delta \vee \gamma)\in S$. 

If we disregard $\delta$ (but keep $\delta_\ast$ and $\delta^+$), then $\beta$ and $\gamma$ play a symmetric role. This symmetry corresponds to the symmetry of Figure~\ref{fig11} across a vertical axis, if the arrow is disregarded.
Hence, $\gamma\wedge(\delta^+ \vee \beta)= e(a_k,b_{k-1})$ and $e(a_k,b_{k-1})\leq \gamma\wedge(\delta_\ast \vee \delta)$. Thus, 
\begin{equation}
e(a_k,b_{k-1})\leq \gamma\wedge(\delta_\ast \vee \beta)
\leq \gamma\wedge(\delta \vee \beta)
\leq \gamma\wedge(\delta^+ \vee \beta)= e(a_k,b_{k-1}).
\label{eq:hXsvrJkRsKkKksRv}
\end{equation}
This implies that 
\begin{equation}
\parbox{8cm}{
$\eta_k:=e(a_k,b_{k-1})=\gamma\wedge(\delta\vee \beta)\in S$.
}
\end{equation}
Therefore, $\Equ{\restrict B A}\subseteq S$ by 
 Lemma~\ref{lemma:ZdrlMM}. Since the restriction of the isomorphism given in \eqref{eq:QrvlZmsTrRf} to $\Equ{\restrict B A}$
is the isomorphism given in \eqref{eq:QGptFlLcSzpDgR}, it follows from Lemma~\ref{lemma:KEndWzQ},  $\Equ{\restrict B A}\subseteq S$, and $\delta\in \Quo{\restrict B A}\setminus \Equ {\restrict B A}$ that $S=\Quo{\restrict B A}$. Furthermore, $\delta<\alpha$ and $\set{\alpha,\beta,\gamma,\delta}$
is a $(1+1+2)$-subset of $\Quo{\restrict B A}$. Thus,
\begin{equation}
\text{ $\set{\alpha,\beta,\gamma,\delta}$ is a $(1+1+2)$-generating set of $\Quo{\restrict B A}$.}
\label{eq:ztpZwrSplNkQdG}
\end{equation}
Finally, using that we know from the sentence containing \eqref{eq:QrvlZmsTrRf} that  $\Quo{\restrict B A}\cong \Quo A$, or letting $B:=A$ when $\Quo{\restrict B A} = \Quo A$, part \eqref{thm:sBbszmKa} of the theorem follows from \eqref{eq:ztpZwrSplNkQdG}.

\begin{figure}[ht]
\centerline
{\includegraphics[scale=1]{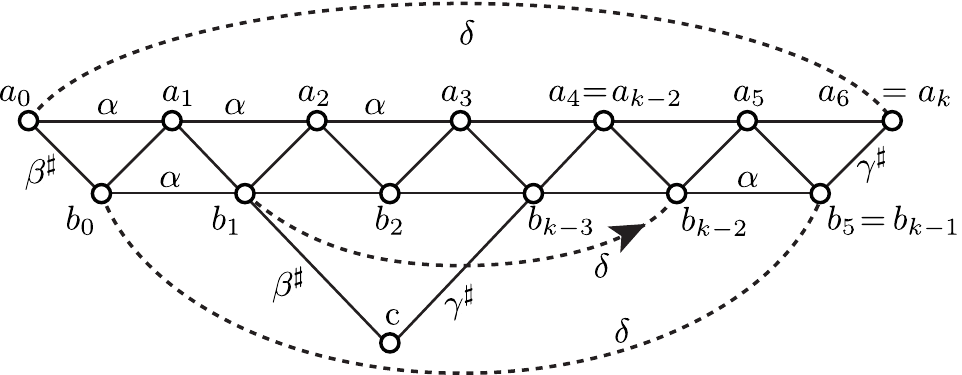}}
\caption{$\set{\alpha, \beta^\sharp, \gamma^\sharp, \delta}$ is a $(1+1+2)$-generating set of $\Quo {14}$}\label{fig14}
\end{figure}

Next, we turn our attention to  part \eqref{thm:sBbszmKb}. 
For an \emph{odd} number $n$, the validity of part \eqref{thm:sBbszmKb} follows from  part \eqref{thm:sBbszmKa}. Therefore, to  prove part \eqref{thm:sBbszmKb}, we can assume that $n\geq 14$ is an \emph{even} number. We let $k:=(n-2)/2\geq 6$. With this $k$, we use the same  Z\'adori configuration as  in part \eqref{thm:sBbszmKa} but now 
we specify that $B=A\cup\set{c}$, where $c$ is a new element outside $A$. So $|B|=|A|+1=2k+2=n$.
We still need  $\alpha$, $\beta$,  $\gamma$, $\delta_\ast$, $\delta$, $\delta^+ \in \Quo{\restrict B A}$
defined in  \eqref{eq:sZwzZtrJFcmSh} and \eqref{eq:wGkRkKLjTlVhn}.  Furthermore, we define the following two  members of $\Quo{B}$:
\begin{equation}
\beta^\sharp:=\beta\vee e(b_1,c)\quad\text{and}\quad 
\gamma^\sharp:=\gamma\vee e(b_{k-3},c).
\label{eq:wMnHwmNskRlWbT}
\end{equation}
The assumption $k\geq 6$ guarantees that \eqref{eq:wMnHwmNskRlWbT} makes sense. As opposed to the previously defined quasiorders of $\Quo{\restrict B A}$, now $\beta^\sharp$ and $\gamma^\sharp$ are not in $\Quo{\restrict B A}$. 
Note that the ``distance'' 
$(k-2)-1$ between the members of the two-element $\delta^+$-block $\set{b_1,b_{k-2}}$ as well as that between the ``suspension points'' $b_1$ and $b_{k-3}$ 
of $c$  are at least 2 and the  $\delta$-block $b_{k-3}/\delta$ is a singleton; this is why we had to assume that  $n\geq 14$, that is, $k\geq 6$. 
For the smallest value, $n=14$, the situation is visualized by  Figure~\ref{fig14}, where the conventions formulated in \eqref{eq:hWTlszBrt} and \eqref{pbx:MzwkrdTss} are valid for  $\tuple{\alpha,\beta^\sharp, \gamma^\sharp,\delta}$ instead of $\tuple{\alpha,\beta, \gamma,\delta}$.

Let $S$ be the sublattice generated by the $(1+1+2)$-subset 
$\set{\alpha,\beta^\sharp,\gamma^\sharp,\delta}$ of $\Quo B$.
We are going to show that $S=\Quo B$. Similarly to 
\eqref{pbx:wZsmnsZvGknHkGyTgl}, we observe that 
\begin{equation}
\parbox{7.5cm}{the blocks of  $(\delta^+ \vee \gamma^\sharp)$ are 
$\set{a_0,a_{1},a_k, b_0,b_{k-1}}$,  
$\set{a_2,a_{k-1},b_1,b_{k-2}}$,
$\set{a_{k-2}, b_{k-3},c}$, 
and the two-element sets  $\set{a_i,b_{i-1}}$ such that $3\leq i\leq k-3$.}
\label{pbx:szMdrnwCvdkZtpBwmR} 
\end{equation}
Hence, similarly to the  three  sentences containing  \eqref{pbx:wZsmnsZvGknHkGyTgl} and  \eqref{eq:rNrNJGrTLhLBl}, we have that $\beta^\sharp\wedge(\delta^+ \vee \gamma^\sharp)= e(a_0,b_0)$. Thus, 
\begin{equation}
e(a_0,b_0)\leq \beta^\sharp\wedge(\delta_\ast \vee \gamma^\sharp)
\leq \beta^\sharp\wedge(\delta \vee \gamma^\sharp)
\leq \beta^\sharp\wedge(\delta^+ \vee \gamma^\sharp)= e(a_0,b_0),
\end{equation}
implying that  $\epsilon_0:=e(a_0,b_0)= \beta^\sharp\wedge(\delta \vee \gamma^\sharp)\in S$.

Next, to get rid of the non-symmetrically positioned element $c$, see Figure~\ref{fig14}, observe that $\beta=(\epsilon_0\vee \alpha)\wedge \beta^\sharp\in S$ and $\gamma=(\epsilon_0\vee \alpha)\wedge \gamma^\sharp\in S$. Thus,
$\set{\alpha,\beta,\gamma,\delta}\subseteq S$, and it follows from \eqref{eq:ztpZwrSplNkQdG} that 
\begin{equation}
\Quo{\restrict B A} \subseteq S.
\label{eq:wcSlrhRmTrShmBt}
\end{equation}
In particular,  $e(b_1,b_{k-3})\in S$. Hence, using that 
the  only $(e(b_1,b_{k-3})\vee \gamma)$-block
that is not a $\gamma$-block is 
  $\set{a_2,a_{k-2}, b_1,b_{k-3},c}$, we obtain that 
$e(b_1,c)=\beta\wedge (e(b_1,b_{k-3})\vee \gamma)\in S$.
Similarly, using that $ e(b_1,b_{k-3})\in S$ by \eqref{eq:wcSlrhRmTrShmBt} and the only $(e(b_1,b_{k-3})\vee \beta)$-block that is not a $\beta$-block is $\set{a_1,a_{k-3}, b_1,b_{k-3},c}$, we obtain that $e(b_{k-3},c)=\gamma\wedge (e(b_1,b_{k-3})\vee \beta)\in S$. If $x\in A\setminus \set{b_1,b_{k-3}}$, then 
\begin{equation}
e(x,c)=\bigl(e(x,b_1)\vee e(b_1,c)\bigr)\wedge
\bigl(e(x,b_{k-3})\vee e(b_{k-3},c)\bigr)\in S
\label{eqZwhkVtgtKKnsz}
\end{equation}
by \eqref{eq:wcSlrhRmTrShmBt},   
$e(b_{1},c)\in S$, and $e(b_{k-3},c)\in  S$. 
We obtain  from $e(b_{1},c)\in S$, $e(b_{k-3},c)\in  S$, and  \eqref{eqZwhkVtgtKKnsz} that $e(x,c)\in S$ for all $x\in A$. 
This fact and \eqref{eq:wcSlrhRmTrShmBt} yield that 
$S$ contains all atoms of $\Equ B$. Using that each element of $\Equ B$ is the join of some atoms, see \eqref{dg:eq:wbhWrnGtrSg}, 
we obtain that $\Equ B\subseteq S$.  Finally, $\delta\in S\setminus \Equ B$ and Lemma~\ref{lemma:KEndWzQ} (applied to $B$ instead of $A$) imply that $S=\Quo B$. So $\Quo B$ is generated by its $(1+1+2)$-subset  $\set{\alpha,\beta^\sharp,\gamma^\sharp,\delta}$ and $|B|=2k+2=n$, completing the proof of Theorem~\ref{thm:sBbszmK}.
\end{proof}

\begin{remark}[Summary of $(1+1+2)$-generated quasiorder lattices]\label{rem:newandopen}
Now, at the end of this writing, the following is known on
the existence of $(1+1+2)$-generating sets of $\Quo n$ for  $n\in\Nplu$.
As new results, this paper proves that $\Quo n$ is $(1+1+2)$-generated for
\begin{equation}
n\in\set{3, 6, 11}\cup\set{14, 16, 18, 20, 22,\dots, 50, 52,54};
\label{eq:wKlTsRznGptGb}
\end{equation}
that is, for twenty-four new values of $n$; see Theorems~\ref{thm:wwQhTszNsJk} and \ref{thm:sBbszmK}   and Corollary~\ref{corol:quo3}. We know that, trivially, $\Quo 1$ and $\Quo 2$ are not $(1+1+2)$-generated. 
In addition to \eqref{eq:wKlTsRznGptGb}, 
 $\Quo n$ is $(1+1+2)$-generated for  all $n\geq 13$; see Theorem~\ref{thm:sBbszmK}. For 
\begin{equation}
n\in\set{4,5,7,8,9,10,12},
\end{equation}
we do not know whether $\Quo n$ is $(1+1+2)$-generated.
\end{remark}

\end{document}